\documentclass[12pt]{article}

\usepackage{amsmath, amsthm, amssymb, enumitem}
\usepackage{fullpage}
\usepackage{tikz}
\usepackage{tikz-qtree}

\title{\bf Structure and Growth of Galileo Sequences}
\author{William Cheah and David Treeby}
\date{}

\newtheorem{theorem}{Theorem}
\newtheorem{lemma}[theorem]{Lemma}
\newtheorem{definition}[theorem]{Definition}
\newtheorem{example}[theorem]{Example}

\begin{document}

\maketitle

\begin{abstract}
A \emph{Galileo sequence} \((a_n)\) is a sequence of positive integers whose partial sums $S_n$ satisfy $S_{2n}=kS_n$ for some $k>1$. In this paper we prove that every polynomial Galileo sequence is given by first differences of the form \(a_n= C\left(n^d-(n-1)^d\right)\). We then show that every positive Galileo sequence has a binary-tree representation. Finally, for positive monotone integer-valued Galileo sequences, we prove power-law growth bounds, and give a continuous analog together with a characterization of all continuous solutions.
\end{abstract}

\noindent\textbf{Keywords:} Galileo sequence, integer sequence, binary tree, polynomial sequence, power-law growth, functional equation

\noindent\textbf{2020 Mathematics Subject Classification:} Primary 11B37; Secondary 11B83, 39B22

\section{Introduction: Galileo sequences}

In 1615 Galileo \cite{Galileo} observed that the distance traveled by a falling body is proportional to the square of the time, a point presented in his 1638 treatise \emph{Two New Sciences}. Normalizing units so that the total distance traveled after \(n\) seconds is \(S_n=n^2\), we have
\(
S_n-S_{n-1}=n^2-(n-1)^2=2n-1,
\)
so the distance traveled during the \(n\)th second is \(2n-1\). Thus the successive one-second increments form the sequence of odd numbers, and their partial sums are the square numbers:
\(
S_n=1+3+\cdots+(2n-1)=n^2.
\)
It therefore follows that
\[
S_{2n}=(2n)^2=4S_n,
\]
for all \(n\ge 1\). This simple dilation law motivates the following definition.

\begin{definition}
Let \((a_n)\) be a sequence of positive integers with partial sums
\(
S_n=\sum_{i=1}^n a_i.
\)
We say that \((a_n)\) is a \textbf{Galileo sequence} if there exists an integer \(k>1\) such that
\begin{equation}\label{G}
S_{2n}=k\,S_n,
\tag{G}
\end{equation}
for every \(n\ge1\). We refer to \eqref{G} as the \textbf{Galileo relation}.
\end{definition}

Earlier work has considered such sequences. Pulapaka~\cite{Pulapaka} studied alternating Galileo sequences, and E\u{g}ecio\u{g}lu~\cite{Egecioglu} investigated iterated forms of the relation. Zeitlin~\cite{Zeitlin} stated that the polynomial solutions of the Galileo relation are exactly the family
\[
a_n=n^d-(n-1)^d.
\]
However, his note contains no proof of this claim, and we have not found a proof elsewhere in the literature. Therefore we include a short proof here. We also show that every positive Galileo sequence has a binary-tree representation. Later, we prove that all positive monotone integer-valued Galileo sequences satisfy a power-law growth bound, and conclude by describing a continuous analog and giving a characterization of all continuous solutions.

\section{Polynomial solutions}

The terms in the classical Galileo sequence \(a_n = 2n-1\) can be written as a difference of squares, \(a_n = n^2 - (n-1)^2\). Generalizing this, Zeitlin~\cite{Zeitlin} observed that for any integer \(d \ge 1\), the polynomial sequence \(a_n = n^d - (n-1)^d\) always satisfies the Galileo relation. We show that if \((a_n)\) is a Galileo sequence whose terms are generated by a polynomial, then it \emph{must} arise from such a difference of powers. Our first step is to prove the following lemma. 

\begin{lemma}\label{lem:dilation1}
Let $T(x)$ be a real polynomial, let $\alpha>0$ with $\alpha\ne1$, and let $\lambda\in\mathbb{R}$.
If
\[
T(\alpha x)=\lambda\,T(x),
\]
for all real $x$, then either $T\equiv0$ or else
\[
T(x)=C x^d,\qquad
\lambda=\alpha^d,
\]
for some constant $C\ne0$ and integer $d\ge0$.
\end{lemma}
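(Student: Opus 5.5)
We would prove the lemma by comparing coefficients. Suppose $T\not\equiv0$ and write
\[
T(x)=\sum_{j=0}^{n} c_j x^j .
\]
Since $T(\alpha x)$ and $\lambda T(x)$ agree for all real $x$, they agree as polynomials. Their coefficients therefore match degree by degree:
\[
c_j\alpha^j=\lambda c_j \qquad\text{for every } j=0,1,\dots,n.
\]
Thus, for each index $j$ with $c_j\neq0$, we get $\lambda=\alpha^j$.

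The key step is to show that at most one coefficient can be nonzero. Take $d$ to be the degree of $T$, so $c_d\ne0$ and hence $\lambda=\alpha^d$. If some other $c_j\ne0$ with $j\ne d$, then also $\lambda=\alpha^j$, giving $\alpha^j=\alpha^d$. Since $\alpha>0$, this means $\alpha^{j-d}=1$. Because $j-d\neq0$, we have $(j-d)\ln\alpha=0$, so $\ln\alpha=0$ and $\alpha=1$. This contradicts the hypothesis $\alpha\ne1$. Hence $c_j=0$ for all $j\ne d$, and $T(x)=Cx^d$ with $C=c_d\ne0$ and $\lambda=\alpha^d$.

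The only point needing care is the injectivity of $j\mapsto\alpha^j$ for $\alpha>0$, $\alpha\neq1$. This is exactly where both hypotheses on $\alpha$ are used: positivity excludes $\alpha=-1$, where $\alpha^j$ repeats values, and $\alpha\ne1$ excludes the trivial dilation. The degree-zero case $d=0$ is covered automatically: a nonzero constant with $\lambda=1$. No earlier result from the paper is needed.
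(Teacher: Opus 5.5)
Your proposal is correct and follows essentially the same argument as the paper: compare coefficients to get $c_j\alpha^j=\lambda c_j$, then use the injectivity of $j\mapsto\alpha^j$ for $\alpha>0$, $\alpha\ne1$ to conclude that at most one coefficient is nonzero. The only difference is that you justify the distinctness of the powers $\alpha^j$ explicitly via logarithms, whereas the paper simply asserts it.
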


\begin{proof}
Write $T(x)=\sum_{j=0}^m c_j x^j$ with $c_m\ne0$. Equating coefficients in $T(\alpha x)=\lambda T(x)$ gives $c_j \alpha^j=\lambda c_j$. If $c_j \neq 0$, then $\lambda = \alpha^j$. Since the values $\alpha^j$ are distinct, at most one coefficient $c_j$ can be nonzero. Hence $T(x)=Cx^d$.
\end{proof}

\begin{theorem}\label{thm:poly}
Let $(a_n)$ be a sequence satisfying the Galileo relation (G). Assume that there exists a real polynomial $p(x)$ such that $a_n = p(n)$. Then there exist a constant $C\ne0$ and an integer $d\ge1$ such that
\begin{align*}
a_n&= C\left(n^d-(n-1)^d\right),\\
S_n&= C\,n^d,
\end{align*}
and in particular $k = 2^d$.
\end{theorem}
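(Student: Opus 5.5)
The plan is to transfer the problem from the terms $a_n$ to the partial sums and then apply Lemma~\ref{lem:dilation1}. Since $a_n=p(n)$ for a polynomial $p$, the partial sums $S_n=\sum_{i=1}^n p(i)$ agree at positive integers with a polynomial $S(x)$. This follows from Faulhaber's formula, or more simply from the fact that the summation operator sends polynomials of degree $m$ to polynomials of degree $m+1$. We may normalize so that $S(0)=0$, since the empty sum is $0$ and the polynomial identity $S(x)-S(x-1)=p(x)$ then extends the definition consistently. The Galileo relation (G) says that the polynomial $T(x)=S(2x)-kS(x)$ vanishes at every positive integer. Because it has infinitely many roots, $T\equiv 0$, and so $S(2x)=k\,S(x)$ holds for all real $x$.

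Next we apply Lemma~\ref{lem:dilation1} with $\alpha=2$ and $\lambda=k$. Either $S\equiv 0$ or $S(x)=Cx^d$ with $k=2^d$ and $C\neq 0$. The case $S\equiv0$ is impossible: it would force $a_n=S_n-S_{n-1}=0$, contradicting positivity (and in any case $S_1=a_1>0$). The case $d=0$ would give $k=1$, contradicting $k>1$. Alternatively, it would give $S$ constant, hence $a_n=0$ for $n\ge2$; in fact $S(0)=0$ already forces $d\ge1$ or $C=0$. Hence $d\ge1$, $S_n=Cn^d$, and
\[
a_n=S_n-S_{n-1}=C\left(n^d-(n-1)^d\right)
\]
for $n\ge 2$. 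For $n=1$ we get $a_1=S_1=C=C(1^d-0^d)$, so the formula holds for all $n\ge1$.

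The only step needing care is the first: justifying that $S_n$ is the restriction of a polynomial with $S(0)=0$. Without that normalization, one might worry about a constant offset, which would break the clean dilation identity. I would handle it directly. Define $S(x)$ as the unique polynomial with $S(x)-S(x-1)=p(x)$ and $S(0)=0$; it exists because the differences $\binom{x}{j}-\binom{x-1}{j}=\binom{x-1}{j-1}$ span all polynomials. Induction on $n$ then gives $S(n)=S_n$ for all $n\ge 0$. Everything after that is routine.
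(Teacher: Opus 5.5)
Your proof is correct and follows essentially the same route as the paper: you interpolate $S_n$ by a polynomial, deduce $S(2x)=kS(x)$ identically from the infinitely many roots of $S(2x)-kS(x)$, apply Lemma~\ref{lem:dilation1}, and rule out $S\equiv 0$ via $a_1>0$ and $d=0$ via $k>1$. Your extra care about the normalization $S(0)=0$ is harmless but not needed: the polynomial agreeing with $S_n$ at all positive integers is unique, so there is no constant offset to choose, and $S(0)=0$ already follows from the lemma's conclusion $S(x)=Cx^d$ with $d\ge1$.
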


\begin{proof}
Since $a_n=p(n)$, there exists a polynomial $P(x)$ (of degree one higher than $p$) such that
\[
S_n = \sum_{i=1}^n p(i) = P(n).
\]
The Galileo relation $S_{2n}=kS_n$ then reads
\[
P(2n) = k\,P(n). 
\]
Define $Q(x)=P(2x)-kP(x)$. Then $Q(n)=0$ for all $n\ge1$, so $Q\equiv0$, since a polynomial with infinitely many zeros is identically zero. Hence, for $x\in\mathbb{R}$,
\[
P(2x)=kP(x).
\]
We may now apply Lemma~\ref{lem:dilation1} with $T=P$, $\alpha=2$ and $\lambda=k$. Since $S_1=a_1>0$, the polynomial $P$ is not identically zero. Thus the lemma gives
\[
P(x) = C x^d,\qquad k = 2^d,
\]
for some constant $C\ne0$ and integer $d\ge0$. Because $k>1$ we have $d\ge1$. Finally, we obtain
\[
S_n = P(n) = C n^d,
\]
and so
\[
a_n = S_n - S_{n-1}
    = C\bigl(n^d - (n-1)^d\bigr).
\]
Thus within the class of polynomials, Galileo sequences can only be generated by this specific form.
\end{proof}

\section{Non-polynomial solutions}

The previous result raises a natural question: are there \emph{non-polynomial} sequences satisfying the Galileo condition, and if so, can we describe them all? The existence of such examples is demonstrated by Tattersall \cite[p.~23]{Tattersall}. Indeed, for every $k>3$ there exists a strictly increasing, non-polynomial sequence of positive integers satisfying this relation, given recursively by $a_1=1,a_2=k-1$ and
\begin{align*}
a_{2n-1} &= \biggl\lfloor \frac{ka_n-1}{2}\biggr\rfloor,\\
a_{2n}&= \biggl\lfloor \frac{ka_n}{2}\biggr\rfloor+1.
\end{align*}
Thus non-polynomial Galileo sequences certainly exist; we now show how both this example and the previous polynomial family fit into a single binary-tree structure. 

\section{A local identity and the binary tree viewpoint}

E\u{g}ecio\u{g}lu \cite{Egecioglu} noted that the global condition (G) on partial sums can be replaced by an equivalent local identity (L) involving triples $(a_n,a_{2n-1},a_{2n})$. For the sake of completeness, we prove that (L) and (G) are equivalent conditions.

\begin{lemma}\label{lem:equiv}
The Galileo relation (G) holds if and only if
\begin{equation}\label{(L)}
a_{2n-1}+a_{2n}=k\,a_n,\tag{L}
\end{equation}
for all $n\geq 1$. 
\end{lemma}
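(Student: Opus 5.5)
The plan is to prove the two implications separately, using only the definition $S_n=\sum_{i=1}^n a_i$ (with the convention $S_0=0$) and the difference identity $a_n=S_n-S_{n-1}$.

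\emph{(G) implies (L).} Assume $S_{2n}=kS_n$ for every $n\ge1$. I will subtract the instance at $n-1$ from the instance at $n$. This is valid for $n\ge2$ directly. For $n=1$ it also holds, since $S_0=0=k\cdot 0$, so (G) is trivially true at $n=0$ as well. The subtraction gives
\[
S_{2n}-S_{2n-2}=k\,(S_n-S_{n-1}).
\]
The left side equals $a_{2n-1}+a_{2n}$ and the right side equals $k\,a_n$, which is exactly (L) for all $n\ge1$.

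\emph{(L) implies (G).} Assume $a_{2i-1}+a_{2i}=k\,a_i$ for all $i\ge1$. The key step is to group the terms of $S_{2n}$ in consecutive pairs:
\[
S_{2n}=\sum_{i=1}^{n}\bigl(a_{2i-1}+a_{2i}\bigr)=\sum_{i=1}^{n}k\,a_i=k\,S_n .
\]
This is (G). Alternatively, the same conclusion follows by induction on $n$: the base case $n=1$ is (L) with $n=1$, and the inductive step adds (L) at $n+1$ to (G) at $n$.

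There is no real obstacle here. The only point that needs care is the boundary case $n=1$ in the first implication, and the convention $S_0=0$ handles it uniformly. Positivity and integrality of the $a_n$ are not used, so the equivalence holds for arbitrary real sequences and any fixed $k$.
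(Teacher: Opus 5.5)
Your proposal is correct and follows the paper's own proof: you subtract consecutive instances of (G) to get (L), and you sum (L) over $n=1,\dots,N$, pairing the terms of $S_{2N}$, to recover (G). Your explicit convention $S_0=0$, which makes the case $n=1$ of the first implication work, tidies up a point the paper leaves implicit.
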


\begin{proof}
First suppose (G) holds. Then since $S_{2n}=kS_n$ and $S_{2n-2}=kS_{n-1}$, we obtain
\[
a_{2n-1}+a_{2n}
= S_{2n}-S_{2n-2}
= k(S_n-S_{n-1})
= k\,a_n.
\]
Now suppose (L) holds. Summing both sides of (L) from $n=1$ to $N$ gives
\begin{align*}
\sum_{n=1}^{N} (a_{2n-1}+a_{2n}) = \sum_{n=1}^{N} k\,a_n\Longrightarrow \sum_{n=1}^{2N} a_{n} = k\sum_{n=1}^{N} \,a_n.
\end{align*}
This shows that $S_{2N}=kS_N$, proving (G).
\end{proof}

We now demonstrate that identity (L) can be developed into a binary-tree representation of \emph{all} Galileo sequences. For $n=1$, the local identity gives
\(
a_1+a_2=ka_1,
\)
so once $a_1>0$ is chosen, the root relation forces
\(
a_2=(k-1)a_1.
\)
For each $n\ge2$, we may regard the node $n$ as having two children
\[
n \longrightarrow (2n-1,2n)
\]
whose labels add up to $k$ times the parent label. More explicitly, suppose $(a_n)$ is a positive Galileo sequence. For each $n\ge2$, dividing the local identity (L) by $a_n\neq 0$ gives
\(
\frac{a_{2n-1}}{a_n} + \frac{a_{2n}}{a_n} = k.
\)
It is therefore natural to introduce the ratios
\[
b_n = \frac{a_{2n-1}}{a_n} \text{ and }
c_n = \frac{a_{2n}}{a_n},
\]
so that $b_n+c_n=k$. We call $(b_n,c_n)$ \emph{splitting factors}. Thus, for each $n\ge2$, our recurrence takes the simpler form
\[
a_{2n-1}=b_n\,a_n \text{ and } a_{2n}=c_n\,a_n.
\]

\noindent Conversely, once $a_1>0$ is chosen and $a_2=(k-1)a_1$ is fixed, any choice of positive splitting factors $(b_n,c_n)$ satisfying $b_n+c_n=k$ for each $n\ge2$ uniquely determines a sequence $(a_n)$ by propagating the rule down the binary tree. Indeed, for $n=1$ we have
\(
a_1+a_2=a_1+(k-1)a_1=ka_1,
\)
and for each $n\ge2$ the defining relations give
\[
a_{2n-1}+a_{2n}=b_n a_n+c_n a_n=(b_n+c_n)a_n=ka_n.
\]
Thus the local identity (L) holds for all $n\ge1$, and Lemma~\ref{lem:equiv} shows that $(a_n)$ satisfies the Galileo relation~\eqref{G}. We record these findings in the following theorem.

\begin{theorem}\label{thm:tree}
Fix $k>1$ and choose $a_1>0$. Set
\(
a_2=(k-1)a_1.
\)
For each $n\ge2$, choose real numbers $b_n,c_n>0$ with $b_n+c_n=k$, and define
\[
a_{2n-1}=b_n\,a_n,\qquad a_{2n}=c_n\,a_n
\qquad(n\ge2).
\]
Then $(a_n)$ satisfies the Galileo relation~\eqref{G}. Conversely, every positive sequence satisfying~\eqref{G} arises uniquely in this way, with
\[
a_2=(k-1)a_1,\qquad
b_n=\frac{a_{2n-1}}{a_n},\qquad
c_n=\frac{a_{2n}}{a_n}
\qquad(n\ge2).
\]
\end{theorem}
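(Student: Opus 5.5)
The plan is to reduce both directions to Lemma~\ref{lem:equiv}, which lets us work with the local identity (L) instead of the global relation (G). The one structural point to check first is that the construction actually defines every $a_n$. I will argue by strong induction on $n$. The values $a_1$ and $a_2$ are given directly. For $n\ge3$, write $n=2m-1$ or $n=2m$ with $m=\lceil n/2\rceil$; since $n\ge3$ we have $2\le m<n$. So $a_n$ is determined by $a_m$ and by $b_m$ or $c_m$, and these are available by induction. Every index $n\ge3$ has exactly one such parent $m\ge2$, so there are no conflicting assignments and the sequence is well defined. Positivity also propagates: $a_1>0$ and $k>1$ give $a_2>0$, and each later term is a positive factor times a positive term.

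\textbf{Forward direction.} I will verify (L) for every $n\ge1$.
\begin{itemize}
\item For $n=1$ we have $a_1+a_2=a_1+(k-1)a_1=ka_1$.
\item For $n\ge2$ we have $a_{2n-1}+a_{2n}=(b_n+c_n)a_n=ka_n$.
\end{itemize}
Lemma~\ref{lem:equiv} then yields (G).

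\textbf{Converse.} Let $(a_n)$ be a positive sequence satisfying (G). By Lemma~\ref{lem:equiv} it satisfies (L).
\begin{itemize}
\item Taking $n=1$ in (L) gives $a_2=(k-1)a_1$.
\item Since $a_n>0$, the ratios $b_n=a_{2n-1}/a_n$ and $c_n=a_{2n}/a_n$ are defined and positive for every $n\ge2$.
\item Dividing (L) by $a_n$ gives $b_n+c_n=k$.
\item By construction $a_{2n-1}=b_na_n$ and $a_{2n}=c_na_n$.
\end{itemize}
Hence the data $(a_1,(b_n,c_n)_{n\ge2})$ is admissible for the construction. Its output agrees with $(a_n)$, by the same strong induction as in the well-definedness step.

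\textbf{Uniqueness.} Suppose some admissible data $(a_1',b_n',c_n')$ produces the same sequence. Then $a_1'=a_1$ directly. For each $n\ge2$, the defining relations force $b_n'=a_{2n-1}/a_n$ and $c_n'=a_{2n}/a_n$, which is legitimate because $a_n\neq0$. So the representation is unique.

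There is no genuine obstacle here. The only step needing care is the indexing in the induction: each $n\ge3$ has a unique parent $\lceil n/2\rceil\in[2,n)$, while $n=1$ and $n=2$ are handled separately through the root relation.
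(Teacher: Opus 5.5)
Your proposal is correct and follows the paper's argument. Both directions go through the local identity (L) and Lemma~\ref{lem:equiv}, with the root relation giving $a_2=(k-1)a_1$ and the splitting factors taken as the ratios $a_{2n-1}/a_n$ and $a_{2n}/a_n$; the only addition is that you make explicit, via the unique parent $\lceil n/2\rceil\in[2,n)$ of each $n\ge3$, the well-definedness and uniqueness that the paper compresses into ``propagating the rule down the binary tree.''
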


Tracing the path from $2$ to any node $n\ge2$ in the binary tree yields an exact product representation. Consider the binary tree with root $2$ and children
\[
m\longrightarrow (2m-1,2m),
\]
where $m\geq 2$. If
\(
2=m_0\to m_1\to\cdots\to m_r=n
\)
is the unique path from $2$ to $n$, then
\[
a_n=a_2\prod_{j=0}^{r-1}\omega_j,
\]
where
\[
\omega_j=
\begin{cases}
b_{m_j}, & \text{if } m_{j+1}=2m_j-1,\\[1mm]
c_{m_j}, & \text{if } m_{j+1}=2m_j.
\end{cases}
\]
Since $a_2=(k-1)a_1$, this also gives $a_n$ in terms of $a_1$. The freedom to choose the factors in the above product yields a large family of non-polynomial solutions. 

\begin{example} [Equal children for $k=4$]
We illustrate the binary-tree representation in the case $k=4$.
Start with $a_1=1$. It follows that $a_2=(4-1)a_1=3$. Dividing by $a_1$ gives the normalized splitting factors at the root:
\[
b_1=\frac{a_1}{a_1}=1,\qquad
c_1=\frac{a_2}{a_1}=3,
\qquad b_1+c_1=4.
\]
\noindent From level $n\ge2$ onward we now choose the equal splitting factors $b_n=c_n=2$, so that $b_n+c_n=4$. Thus for every $n\ge2$,
\begin{align*}
a_{2n-1}=2a_n,\quad
a_{2n}=2a_n.
\end{align*}
We then apply the above recursion to give the binary tree below. The resulting monotone Galileo sequence is illustrated in Figure \ref{fig:balanced-tree} below. It is constant on each interval $(2^{m-1},2^m]$ for $m\ge2$: 
\[
(a_n)=1,\,3,\,6,\,6,\,12,\,12,\,12,\,12,\ldots
\]
In fact, for $n\ge2$, this sequence is given by an explicit formula
\(
a_n=3\cdot 2^{\lfloor \log_2(n-1)\rfloor}.
\) (See A053644 in the OEIS \cite{OEIS}, shifted by one index and multiplied by $3$.)

\begin{figure}[h]
\centering

\begin{tikzpicture}[
  level distance=12mm,
  sibling distance=8mm,
  edge from parent/.style={draw,-latex}
]
\Tree
[.$1$
    [.$3$
        \edge node[above left] {$b_2=2$};
        [.$6$
            \edge node[above left] {$b_3=2$};
            [.$12$ ]
            \edge node[above right] {$c_3=2$};
            [.$12$ ]
        ]
        \edge node[above right] {$c_2=2$};
        [.$6$
            \edge node[above left] {$b_4=2$};
            [.$12$ ]
            \edge node[above right] {$c_4=2$};
            [.$12$ ]
        ]
    ]
]
\end{tikzpicture}
\caption{Equal children: $a_1=1$, $a_2=3$ given, then $b_n=c_n=2$ for $n\ge2$.}
\label{fig:balanced-tree}
\end{figure}
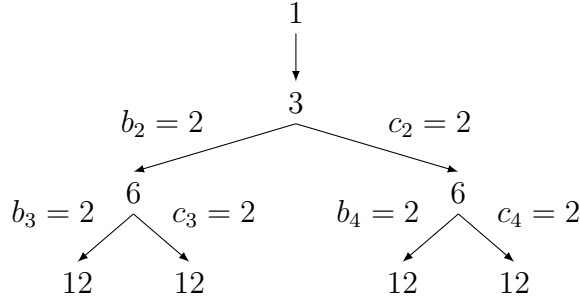
\end{example}

\begin{example} [Unequal children for $k=4$]
For contrast, set $b_n=1$, $c_n=3$ for every $n\ge2$, so each parent splits as $(1,3)$. The resulting tree (Figure~\ref{fig:unbalanced-tree}) still satisfies \textup{(G)}, but the labels listed in natural order are no longer monotone, nor are they constant on each interval $(2^{m-1},2^m]$ for $m\ge2$. The binary tree for this sequence is illustrated below in Figure \ref{fig:unbalanced-tree}.  The sequence begins as:
\[
(a_n)=1,\,3,\,3,\,9,\,3,\,9,\,9,\,27,\ldots
\]
This sequence also has an explicit formula,
\(
a_n=3^{s_2(n-1)},
\)
where $s_2(m)$ denotes the number of $1$'s in the binary expansion of $m$. (See A048883 in the OEIS \cite{OEIS}, shifted by one index.)

\begin{figure}[h]
\centering
\begin{tikzpicture}[
  level distance=12mm,
  sibling distance=8mm,
  edge from parent/.style={draw,-latex}
]
\Tree
[.$1$
    [.$3$
        \edge node[above left] {$b_2=1$};
        [.$3$
            \edge node[above left] {$b_3=1$};
            [.$3$ ]
            \edge node[above right] {$c_3=3$};
            [.$9$ ]
        ]
        \edge node[above right] {$c_2=3$};
        [.$9$
            \edge node[above left] {$b_4=1$};
            [.$9$ ]
            \edge node[above right] {$c_4=3$};
            [.$27$ ]
        ]
    ]
]
\end{tikzpicture}
\caption{Unequal children: $a_1=1$, $a_2=3$ given, then $(b_n,c_n)=(1,3)$ for $n\ge2$.}
\label{fig:unbalanced-tree}
\end{figure}
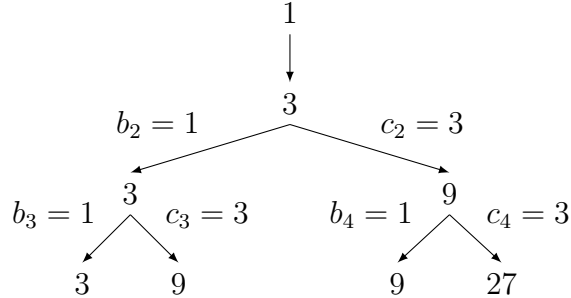

\end{example}

\noindent

\section{Important examples}

As every Galileo sequence can be generated in this fashion, we can recover each of the families encountered thus far by some suitable choice of $(b_n,c_n)$. 

\begin{example}[Galileo's sequence of odd integers]
For the classical sequence $a_n = 2n-1$ we have $S_n = n^2$ and hence $k=4$. The splitting factors are
\[
b_n=\frac{a_{2n-1}}{a_n}
    =\frac{4n-3}{2n-1},\qquad
c_n=\frac{a_{2n}}{a_n}
    =\frac{4n-1}{2n-1},
\]
which satisfy $b_n + c_n = 4$ for all $n\ge1$. This is the simplest instance of the polynomial family discussed in the next example.
\end{example}

\begin{example}[Polynomial sequences]
Let $k=2^d$ and
\[
a_n=C\bigl(n^d-(n-1)^d\bigr).
\]
Then the splitting factors are
\[
b_n=\frac{(2n-1)^d-(2n-2)^d}{n^d-(n-1)^d},\qquad
c_n=\frac{(2n)^d-(2n-1)^d}{n^d-(n-1)^d},
\]
which always satisfy $b_n+c_n=2^d=k$. These ratios describe all polynomial Galileo sequences. For \(C=1\) and fixed \(d\), the sequences \(a_n=n^d-(n-1)^d\) appear as columns of A047969 in the OEIS \cite{OEIS}; for example, \(d=1,2,3,4,5\) correspond respectively to A000012, A005408, A003215, A005917 (shifted by one index), and A022521.
\end{example}

\begin{example}[Tattersall's sequence]
Fix $k>3$ and define
\[
a_{2n-1}=\Bigl\lfloor\frac{ka_n-1}{2}\Bigr\rfloor,\quad
a_{2n}=\Bigl\lfloor\frac{ka_n}{2}\Bigr\rfloor+1.
\]
The corresponding splitting factors are
\[
b_n=\frac{\lfloor(ka_n-1)/2\rfloor}{a_n},\quad
c_n=\frac{\lfloor ka_n/2\rfloor+1}{a_n},
\]
with $b_n+c_n=k$ for all $n$. 

These produce strictly increasing, non-polynomial Galileo sequences. For comparison with the OEIS, note that the OEIS family is parameterized so that \(a_2=k\), whereas here \(a_2=k-1\). Thus our cases \(k=4,5,6\) correspond to the OEIS entries A005408, A385587, and A385643; see also the family entry A385610 and its cross-references \cite{OEIS}.

\end{example}

\section{Monotone Galileo sequences of integers}

In the preceding section we saw that the binary-tree representation of Galileo sequences allows for an abundance of non-polynomial examples. It is therefore natural to ask what happens when we restrict attention to positive, integer-valued sequences that grow monotonically. We begin with the special cases $k=2$ and $k=3$. We then prove that every monotone sequence exhibits power-law growth. 

\begin{theorem}\label{cor:k2-k3}
Let $(a_n)$ be a positive, non-decreasing, integer-valued sequence satisfying the Galileo relation \textup{(G)} with integer $k>1$. If $k=2$, then $(a_n)$ is constant. If $k=3$, no such sequence exists.
\end{theorem}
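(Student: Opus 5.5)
The plan is to work entirely with the local identity (L) from Lemma~\ref{lem:equiv}, combined with monotonicity. Because $a_{2n-1}\le a_{2n}$, any pair of siblings $(a_{2n-1},a_{2n})$ has a sum that is at least twice the smaller member and at most twice the larger one. On the other hand, monotonicity also places these siblings relative to their parent: for $n\ge2$ we have $2n-1\ge n+1>n$, so $a_{2n-1}\ge a_n$ and hence $a_{2n}\ge a_n$.

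\textbf{The case $k=2$.} Here (L) reads $a_{2n-1}+a_{2n}=2a_n$. For $n\ge2$ both summands are at least $a_n$, so equality forces $a_{2n-1}=a_{2n}=a_n$. The root relation gives $a_2=(k-1)a_1=a_1$. I will then show $a_m=a_1$ for all $m$ by strong induction on $m$. For $m\ge3$, write $m=2n-1$ or $m=2n$ with $2\le n<m$; in either case $a_m=a_n=a_1$. Hence the sequence is constant.

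\textbf{The case $k=3$.} The root relation gives $a_2=2a_1$. Taking $n=2$ in (L) gives $a_3+a_4=3a_2=6a_1$, with $a_3,a_4\ge a_2=2a_1$ and $a_3\le a_4$. I will then push the constraints further down the tree. For any $n\ge2$, (L) says $a_{2n-1}+a_{2n}=3a_n$ with both terms at least $a_n$, so $a_{2n}\le 2a_n$ and $a_{2n-1}\le \tfrac32 a_n$.

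The key leverage is to compare $a_{2n}$ with the next sibling pair. Monotonicity gives $a_{2n}\le a_{2n+1}$, and $a_{2n+1}$ is the smaller child of $n+1$, so $a_{2n+1}\le\tfrac32 a_{n+1}$.

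Rather than chase the inequalities node by node, I expect a growth argument to be cleaner. From (G) we get $S_{2^m}=3^m a_1$. Monotonicity then gives
\[
2^{m-1}a_{2^{m-1}}\le S_{2^m}-S_{2^{m-1}}\le 2^{m-1}a_{2^m},
\]
and the middle quantity equals $2\cdot 3^{m-1}a_1$. So $a_{2^m}\ge 2(3/2)^{m-1}a_1$ and $a_{2^{m-1}}\le 2(3/2)^{m-1}a_1$.

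This alone is not contradictory. Integrality must enter somewhere, because the continuous analogue $S(x)=x^{\log_2 3}$ is perfectly monotone. The integrality I plan to use is the following. Since $a_1$ is a positive integer, the lower bound $a_{2^m}\ge 2(3/2)^{m-1}a_1$ grows, so the values eventually become large. The real obstruction, however, should come from parity and the splitting. The doubling $a_{2n}\le 2a_n$ is the crucial constraint.

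I will try to show by induction that in fact $a_{2n}=2a_n$ and $a_{2n-1}=a_n$ are forced. If that holds, then $a_3=a_2=a_4/2$, so $a_3=2a_1$ and $a_4=4a_1$. But monotonicity would then require $a_5\ge a_4=4a_1$, while $a_5=a_3=2a_1$, which is a contradiction.

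Forcing these equalities is the main obstacle. My first attempt is the following inequality chain:
\[
a_{2n}\le a_{2n+1}\le \tfrac32 a_{n+1}.
\]
I will combine it with the strict increase that appears at powers of two, working directly on the small indices $a_3,\dots,a_8$ in terms of $a_1$ to extract a contradiction, possibly after dividing out common factors so that $a_1$ can be taken minimal.
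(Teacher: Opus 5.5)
Your $k=2$ argument is correct and matches the paper's. The $k=3$ case, however, is not proved. You name ``forcing these equalities'' as the main obstacle and never resolve it, and neither proposed route can work.

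No argument confined to a bounded range of indices can give a contradiction; this includes your planned analysis of $a_3,\dots,a_8$ in terms of $a_1$. Likewise, the equalities $a_{2n-1}=a_n$, $a_{2n}=2a_n$ cannot be forced locally. The prefix $2,4,6,6,9,9,9,9$ is positive and non-decreasing, and satisfies (L) for $n=1,\dots,4$. Its gcd is $1$, so dividing out common factors does not help. Yet $a_3=a_4=6$, rather than $a_3=a_2=4$ and $a_4=2a_2=8$. More generally, $a_1=2^m$ and $a_n=2^{m+1}(3/2)^j$ for $2^j<n\le 2^{j+1}$ give positive non-decreasing integers satisfying (L) for all $n\le 2^{m+1}$. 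So integrality can only fail along an unbounded chain of indices on which values are multiplied by exactly $\tfrac32$, until the power of $2$ in some term is exhausted. Note also that monotonicity pushes toward the equal split $a_{2n-1}=a_{2n}=\tfrac32 a_n$, not the $(1,2)$ split you aim for.

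Your own ingredients are enough to close the gap. Your bound $a_{2^{m-1}}\le 2(3/2)^{m-1}a_1$ is eventually below $2^{m-1}$. A strictly increasing sequence of positive integers would have $a_n\ge n$, so $a_{i+1}=a_i$ for some $i$. Now complete your chain on the left with $a_{2i}\ge\tfrac32 a_i$, which follows from $a_{2i}\ge a_{2i-1}$ and (L). The chain becomes $\tfrac32 a_i\le a_{2i}\le a_{2i+1}\le\tfrac32 a_{i+1}=\tfrac32 a_i$. This forces $a_{2i}=a_{2i+1}=\tfrac32 a_i$, so $(2i,2i+1)$ is again an equal consecutive pair. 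Iterating gives $a_{2^j i}=(3/2)^j a_i$ for all $j$, which is impossible for integers.

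The paper obtains the equal pair differently. It takes $d$ to be the minimal consecutive difference, attained at $i$. It then adds $a_{2i}-a_{2i-1}$, twice $a_{2i+1}-a_{2i}$, and $a_{2i+2}-a_{2i+1}$ to get $4d\le 3(a_{i+1}-a_i)=3d$. Hence $d=0$, and the same iteration follows.
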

\begin{proof}
First suppose $k=2$. Then the root relation gives
\(
a_2=(k-1)a_1=a_1.
\)
For each $n\ge2$, by Theorem~\ref{thm:tree} we have
\[
a_{2n-1}=b_n a_n,\qquad a_{2n}=c_n a_n,\qquad b_n+c_n=2.
\]
Monotonicity gives $a_{2n-1}\ge a_n$ and $a_{2n}\ge a_n$, so $b_n,c_n\ge1$,
hence $b_n=c_n=1$. Thus $a_{2n-1}=a_{2n}=a_n$ for all $n\ge2$, and since also $a_2=a_1$, induction yields $a_n=a_1$ for all $n\ge1$.

\medskip

Now suppose $k=3$, so that $a_{2n-1}+a_{2n}=3a_n$. Assume that there exists a monotone Galileo sequence of integers. Define
\[d=\min\{a_{n+1}-a_n:n\in \mathbb{N}\}\]
to be the minimum difference between consecutive terms in this sequence. By assumption, $d$ is a non-negative integer. Suppose $a_{i+1}-a_i=d$ attains this minimum. Then by minimality,
\[
d \le
\begin{cases}
a_{2i} - a_{2i-1} \\
a_{2i+1} - a_{2i} \\
a_{2i+2} - a_{2i+1}. 
\end{cases}
\]
Combining these gives
\begin{align*}
4d &\leq (a_{2i+2}-a_{2i+1})+(a_{2i+1}-a_{2i}) +(a_{2i+1}-a_{2i})+(a_{2i}-a_{2i-1}) \\
&= (a_{2i+2}+a_{2i+1})-(a_{2i}+a_{2i-1}) \\
&= 3a_{i+1} - 3a_i \\
&= 3d. 
\end{align*}
Hence $d=0$, and equality holds throughout. Thus
\[
a_{2i-1}=a_{2i}=a_{2i+1}=a_{2i+2}.
\]
Applying the Galileo relation, $a_{2i-1}+a_{2i}=3a_i$, we obtain
\[
2a_{2i} = 3a_i \implies a_{2i} = \frac{3}{2}a_i,
\]
and hence all four terms equal $\frac{3}{2}a_i$. Repeating this argument with $i$ replaced by $2i, 4i, 8i,\ldots$ and so forth, we inductively obtain
\[a_{2^ki}=a_{2^ki+1}= \left( \frac{3}{2} \right)^ka_i.\]
Finally, as $a_i\in\mathbb{N}$, this eventually produces a non-integer, yielding the desired contradiction. Thus no positive non-decreasing integer solution exists for $k=3$.
\end{proof}

The case $k=3$ is exceptional. It is the only value for which we cannot generate a monotone Galileo sequence of integers. On the other hand, if $k\geq 4$, we can construct uncountably many positive non-decreasing integer Galileo sequences. We leave these details to the curious reader. Despite the abundance of such sequences, we can still give bounds for how quickly they can grow.

\begin{theorem}
\label{prop:asymptotic}
Let $(a_n)$ be positive, non-decreasing, and integer-valued, and satisfy
\textup{(G)} with constant $k>1$. Set $d=\log_2 k$. 
\begin{enumerate}[label=\textup{(\alph*)}]
\item  If $n=2^r m$ with $m$ odd, then $S_n = c(m)\,n^d$ where $c(m)=\frac{S_m}{m^d}$.
\item  There exist constants $C_1,C_2>0$ such that $C_1 n^d \le S_n \le C_2 n^d$.
\item  Consequently, there exist $D_1,D_2>0$ such that $D_1 n^{d-1} \le a_n \le D_2 n^{d-1}.$
\end{enumerate}
\end{theorem}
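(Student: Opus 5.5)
For part (a) I would simply iterate the Galileo relation. Writing $n=2^r m$ with $m$ odd, repeated application of (G) gives $S_n=S_{2^r m}=k^r S_m$. Since $k=2^d$, we have $k^r=(2^r)^d=(n/m)^d$. Hence $S_n=\frac{S_m}{m^d}\,n^d=c(m)\,n^d$, which is the claim. This is routine and uses only (G).

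For part (b) I would not try to bound $c(m)$ over all odd $m$ directly. Instead I would use that $S_n$ is increasing, which only needs positivity of the $a_i$, and compare against powers of two. For $n\ge1$ choose $r\ge0$ with $2^r\le n<2^{r+1}$. By part (a) with $m=1$ we have $S_{2^r}=k^r a_1=a_1 2^{rd}$. Monotonicity of $S$ then gives
\[
a_1 2^{rd}=S_{2^r}\le S_n\le S_{2^{r+1}}=a_1 2^{(r+1)d}.
\]
Since $n/2<2^r\le n$ and $2^{r+1}\le 2n$, this yields $a_1 2^{-d} n^d\le S_n\le a_1 2^d n^d$. So we may take $C_1=a_1 2^{-d}$ and $C_2=a_1 2^{d}$.

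Part (c) is the only place where the non-decreasing hypothesis enters, and it is the step needing a little care. For the lower bound, monotonicity gives $S_n=\sum_{i\le n}a_i\le n\,a_n$, so $a_n\ge S_n/n\ge C_1 n^{d-1}$. For the upper bound, I would use the block $a_{n+1},\dots,a_{2n}$, each term of which is at least $a_n$. Then
\[
n\,a_n\le S_{2n}-S_n=(k-1)S_n\le (k-1)C_2 n^d,
\]
so $a_n\le (k-1)C_2\,n^{d-1}$. Thus $D_1=C_1$ and $D_2=(k-1)C_2$ work.

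I do not expect a real obstacle. The one point to watch is the upper bound in (c), where one must apply monotonicity to the forward block $(n,2n]$ rather than to the initial segment. Integrality is not actually needed for any of these bounds.
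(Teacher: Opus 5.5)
Your proof is correct, and for (a), (b), and the upper bound in (c) it matches the paper. The paper phrases (a) through the invariant ratio $T(n)=S_n/n^d$ with $T(2n)=T(n)$. It uses the same sandwich $S_{2^r}\le S_n\le S_{2^{r+1}}$ and the same forward-block estimate $n a_n\le (k-1)S_n$.

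The one real difference is the lower bound in (c). The paper also gets it from the block $(n,2n]$, via $\sum_{i=n+1}^{2n}a_i\le n a_{2n}$, but this only controls even-indexed terms. It then needs three more steps:
\begin{enumerate}
\item an even case, $a_{2n}\ge (k-1)C_1 2^{1-d}(2n)^{d-1}$;
\item an odd case $m=2n+1\ge 3$, using monotonicity, $n\ge m/4$, and $d\ge 1$;
\item a final shrinking of the constant to cover $m=1$.
\end{enumerate}
Your observation $S_n\le n a_n$ gives $a_n\ge S_n/n\ge C_1 n^{d-1}$ for every $n\ge 1$ in one line. It needs no parity split, no appeal to $d\ge1$, and no special case, and it gives the explicit constant $D_1=C_1$. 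The paper's route buys nothing extra here, so yours is the cleaner argument. As you note, neither version uses integrality.
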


\begin{proof}
\leavevmode
\begin{enumerate}[label=\textup{(\alph*)}]

\item Write $T(n)=\frac{S_n}{n^d}$. Then
\[
T(2n)=\frac{S_{2n}}{(2n)^d}
=\frac{kS_n}{2^d n^d}
=T(n),
\]
since \(k=2^d\). Hence, if \(n=2^r m\) with \(m\) odd, repeated application of this identity gives
\[
T(n)=T(2^r m)=T(m).
\]
Therefore
\[
\frac{S_n}{n^d}=\frac{S_m}{m^d},
\]
so
\(
S_n=c(m)n^d,
\)
where
\(c(m)=\frac{S_m}{m^d}\).

\item Fix \(n\ge 1\). Choose \(r\) such that
\[
2^r \le n < 2^{r+1}.
\]
Since \((a_n)\) is positive, the partial sums \((S_n)\) are increasing, so
\[
S_{2^r} \le S_n \le S_{2^{r+1}}.
\]
Iterating \textup{(G)} yields
\begin{align*}
S_{2^r}&=S_1(2^r)^d,\\
S_{2^{r+1}}&=S_1(2^{r+1})^d.
\end{align*}
Since \(2^r\ge n/2\) and \(2^{r+1}\le 2n\), it follows that
\[
\frac{S_1}{2^d}n^d \le S_n \le S_1 2^d\,n^d.
\]
Thus we may take \(C_1=\frac{S_1}{2^d}\) and \(C_2=S_1 2^d\).

\item From \textup{(G)},
\[
\sum_{i=n+1}^{2n} a_i = S_{2n}-S_n=(k-1)S_n.
\]
Since \((a_n)\) is non-decreasing,
\[
n a_n \le \sum_{i=n+1}^{2n} a_i \le n a_{2n}.
\]
Using part \textup{(b)}, we obtain
\[
(k-1)C_1 n^d \le (k-1)S_n = \sum_{i=n+1}^{2n} a_i \le (k-1)C_2 n^d.
\]
Hence
\begin{align*}
a_n &\le (k-1)C_2\,n^{d-1},\\
a_{2n} &\ge (k-1)C_1\,n^{d-1}.
\end{align*}
So we may take \(D_2=(k-1)C_2\). For the lower bound, first let \(m=2n\) be even. Then
\[
a_m=a_{2n}\ge (k-1)C_1\,n^{d-1}
      =(k-1)C_1\,2^{\,1-d}m^{d-1}.
\]

Now let \(m=2n+1\ge 3\) be odd. By monotonicity,
\[
a_m\ge a_{m-1}=a_{2n}\ge (k-1)C_1\,n^{d-1}.
\]
Since \(n=(m-1)/2\ge m/4\), and since \(d\ge 1\), we get
\[
a_m\ge (k-1)C_1\left(\frac m4\right)^{d-1}
    =(k-1)C_1\,4^{\,1-d}m^{d-1}.
\]

Therefore, for all \(m\ge 2\),
\[
a_m \ge (k-1)C_1\,4^{\,1-d}m^{d-1}.
\]
Finally, by decreasing this constant if necessary to account for \(m=1\), we obtain a constant \(D_1>0\) such that
\(
a_m\ge D_1 m^{d-1}
\)
for all \(m\ge 1\), as required.
\end{enumerate}
\end{proof}

\section{A continuous analog}

It is a natural extension to consider the continuous analog of Galileo sequences, which we call \emph{Galileo functions}. This amounts to replacing partial sums by integrals.

\begin{definition}
Suppose $a>0$ and $a\neq 1$. A continuous function $f:[0,\infty)\to[0,\infty)$ is a Galileo function if
\begin{equation}\label{G'}
\int_{0}^{ax} f(t) \,dt = b\int_{0}^{x} f(t) \,dt,
\tag{G'}
\end{equation}
for some positive reals $a$ and $b$, for all positive real $x$.
\end{definition}

We show that Galileo functions are exactly power laws multiplied by a continuous periodic function of $\log_a x$. 

\begin{theorem}
Assume \(a>0\) with \(a\neq 1\). A Galileo function \(f\) must satisfy
\[
f(x)=g(\log_a x)\,x^{\log_a(b/a)}
\qquad (x>0),
\]
for some continuous function \(g:\mathbb R\to[0,\infty)\) of period \(1\).
Conversely, any function of this form is a Galileo function.
\end{theorem}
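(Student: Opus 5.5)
The plan is to differentiate (G') and then reduce to the discrete-style invariance used in part (a) of Theorem~\ref{prop:asymptotic}. Set $F(x)=\int_0^x f(t)\,dt$, so $F$ is $C^1$ on $[0,\infty)$ with $F'=f$ by the fundamental theorem of calculus, since $f$ is continuous. The relation (G') reads $F(ax)=bF(x)$ for all $x>0$. Both sides are differentiable in $x$, so differentiating gives
\[
a\,f(ax)=b\,f(x)\qquad(x>0),
\]
that is, $f(ax)=\frac{b}{a}f(x)$. This dilation identity for $f$ is the continuous counterpart of the local identity (L), and the rest of the argument works with it.

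Next, put $e=\log_a(b/a)$, so that $a^e=b/a$. Define $h(x)=f(x)\,x^{-e}$ for $x>0$. Then
\[
h(ax)=f(ax)(ax)^{-e}=\tfrac{b}{a}f(x)\,a^{-e}x^{-e}=h(x).
\]
This is exactly the computation with $T(2n)=T(n)$ in part (a) of Theorem~\ref{prop:asymptotic}. Now set $g(u)=h(a^u)$ for $u\in\mathbb R$. Then $g(u+1)=h(a\cdot a^u)=g(u)$, so $g$ has period $1$. It is continuous because $h$ is continuous on $(0,\infty)$ and $u\mapsto a^u$ is continuous; note that $a\neq1$ makes $u\mapsto a^u$ a bijection onto $(0,\infty)$. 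It is nonnegative because $f\ge0$. Substituting $u=\log_a x$ recovers $f(x)=g(\log_a x)\,x^{e}$, which is the forward direction.

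For the converse, start from $f(x)=g(\log_a x)x^e$. This gives $f(ax)=g(\log_a x+1)a^e x^e=\frac{b}{a}f(x)$. Then I change variables $t=as$:
\[
\int_0^{ax}f(t)\,dt=a\int_0^x f(as)\,ds=b\int_0^x f(s)\,ds,
\]
which is (G').

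The main obstacle is the behaviour at $0$. First, the converse needs $f$ to extend continuously to $[0,\infty)$ and the integrals to converge near $0$. Second, the forward differentiation is only needed for $x>0$, so that direction is fine. For the converse, $g$ is bounded because it is continuous and periodic, so $|f(x)|\le \|g\|_\infty x^e$. When $e>0$ this gives integrability and $f(x)\to0$, so setting $f(0)=0$ makes $f$ continuous. When $e\le 0$ the function need not be continuous at $0$ (unless $g\equiv0$), or even integrable if $e\le -1$. So the converse should be read as holding for those $g$ for which $f$ extends continuously to $0$, or, as I would state it, automatically when $\log_a(b/a)>0$. I would add a remark making this precise rather than try to force the converse in full generality. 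A small additional point is that the forward direction needs no such caveat, since continuity of $f$ on $[0,\infty)$ is assumed there.
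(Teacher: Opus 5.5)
Your forward direction is the paper's argument: you differentiate (G') to get $af(ax)=bf(x)$, and your $g(u)=h(a^u)=f(a^u)(b/a)^{-u}$ is exactly the paper's $g$, with the same periodicity check.

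Where you go beyond the paper is the converse. The paper dismisses it as ``straightforward'' and leaves it to the reader, but your caveat is correct: the converse as printed is false. Two examples show this.
\begin{itemize}
\item Take $a=2$, $b=1$, so $\log_a(b/a)=-1$. With $g\equiv1$ you get $f(x)=1/x$, which is neither continuous at $0$ nor integrable there.
\item Take $a=b=2$ and $g(u)=1+\sin(2\pi u)$. This gives a bounded $f$ that oscillates as $x\to0^+$, so it has no continuous extension to $[0,\infty)$.
\end{itemize}

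One small correction to your remark. When $\log_a(b/a)=0$, a nonzero \emph{constant} $g$ does give a continuous $f$, namely a constant function with $b=a$. So ``unless $g\equiv0$'' is right only for $\log_a(b/a)<0$. The precise statement is that $g(\log_a x)\,x^{\log_a(b/a)}$ extends to a Galileo function exactly in the following cases:
\begin{itemize}
\item $\log_a(b/a)>0$, with $f(0)=0$;
\item $\log_a(b/a)=0$ and $g$ is constant;
\item $g\equiv0$.
\end{itemize}
In each of these cases your change of variables $t=as$ finishes the proof.
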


\begin{proof}
As $f$ is continuous, we can differentiate both sides of \eqref{G'} to give
\begin{equation*}
af(ax)=bf(x).
\end{equation*}
Define $g:\mathbb{R}\to[0,\infty)$ by
\begin{equation*}
g(x)=\frac{f(a^x)}{(b/a)^x}.
\end{equation*}
Since $f$ is continuous, and the functions $x\mapsto a^x$ and $x\mapsto (b/a)^x$ are continuous, $g$ is continuous. We observe:
\[
g(x+1)=\frac{f(a\cdot a^x)}{(b/a)\cdot (b/a)^x}
=\frac{(b/a)f(a^x)}{(b/a)\cdot (b/a)^x}
=g(x).
\]
So $g$ has period $1$. Reversing the definition of $g$, we conclude that
\[
f(x)=g(\log_a x)\cdot x^{\log_a(b/a)}
\qquad (x>0),
\]
for some continuous function $g$ of period $1$. The converse is straightforward and is left to the reader.
\end{proof}

\section{Disclosure on AI-assisted tools}

During preparation of this manuscript, ChatGPT was used to flag possible issues of style-guide compliance and to suggest possible OEIS entries and related references. All such suggestions were manually checked by the authors. 

\section{OEIS sequences}

The OEIS sequences appearing in this paper are A000012, A003215, A005408, A005917, A022521, A047969, A048883, A053644, A385587, A385610, and A385643.

\end{document}